\newcommand{\ifsodaelse}[2]{\ifthenelse{\isundefined{\SODAF}}{#2}{#1}}
\newcommand\remove[1]{}
\newcommand{\rnote}[1]{}
\newcommand{\jnote}[1]{}
\newcommand{\e}{\varepsilon}
\newcommand{\R}{\mathbb{R}}
\renewcommand{\P}{\mathscr{P}}
\newcommand{\E}{\mathbb{E}}
\newcommand{\N}{\mathbb{N}}
\renewcommand{\P}{\mathscr{P}}
\newtheorem{theorem}{Theorem}[section]
\newtheorem{lemma}[theorem]{Lemma}
\newtheorem{corollary}[theorem]{Corollary}
\newtheorem{definition}[theorem]{Definition}
\newcommand{\eqdef}{\stackrel{\mathrm{def}}{=}}
\date{}
\renewcommand{\le}{\leqslant}
\renewcommand{\ge}{\geqslant}
\renewcommand{\leq}{\leqslant}
\renewcommand{\geq}{\geqslant}
\title{Scale-oblivious metric fragmentation and the nonlinear Dvoretzky theorem}
\author{Assaf Naor}
\address{Courant Institute, New York University, New York NY 10012}
\email{naor@cims.nyu.edu}
\author{Terence Tao}
\address{Department of Mathematics, UCLA, Los Angeles CA 90095-1555}
\email{tao@math.ucla.edu}
\date{}
\begin{document}

\begin{abstract}
We introduce a randomized iterative  fragmentation procedure for finite metric spaces, which is guaranteed to result in a polynomially large subset that is   $D$-equivalent to an ultrametric, where $D\in (2,\infty)$ is a prescribed target distortion. Since this procedure works for $D$ arbitrarily close to the nonlinear Dvoretzky phase transition at distortion $2$, we thus obtain a much simpler probabilistic proof of the main result of~\cite{BLMN05-main}, answering a question from~\cite{MN07}, and yielding the best known bounds in the nonlinear Dvoretzky theorem.

Our method utilizes a sequence of random scales at which a given metric space is fragmented. As in many previous randomized arguments in embedding theory, these scales are chosen irrespective of the geometry of the metric space in question. We show that our bounds are sharp if one utilizes such a ``scale-oblivious" fragmentation procedure.
\end{abstract}

\maketitle

\section{Introduction}\label{sec:intro}

A metric space $(X,d)$ is said to embed into Hilbert space with
\emph{distortion} $D\ge 1$ if there exists  $f:X\to \ell_2$
satisfying $d(x,y)\le \|f(x)-f(y)\|_2\le Dd(x,y)$ for all $x,y\in
X$. Dvoretzky's theorem~\cite{Dvo60} asserts that for every $k\in
\N$ and $D>1$ there exists $n=n(k,D)\in \N$ such that every
$n$-dimensional normed space has a $k$-dimensional linear subspace
that embeds into Hilbert space with distortion $D$; see~\cite{Mil71,MS99,Sch06} for the best known bounds on $n(k,D)$.

Motivated by a possible analogue of Dvoretzky's theorem in the class
of general metric spaces, Bourgain, Figiel and Milman introduced
in~\cite{BFM86} the {\em nonlinear Dvoretzky problem}, which asks
for the largest integer $k=k(n,D)$ such that any $n$-point metric
space has a subset of cardinality $k$ that embeds into Hilbert space
with distortion $D$. They showed~\cite{BFM86} that for every $D>1$
we have $\lim_{n\to\infty} k(n,D)=\infty$, thus establishing the
validity of a nonlinear Dvorezky phenomenon. Quantitatively, the main
result of~\cite{BFM86} asserts that $k(n,D)\ge c(D)\log n$, and that
there exists $D_0>1$ for which $k(n,D_0)=O(\log n)$.

Renewed interest in the nonlinear Dvorezky problem due to the
discovery of applications to the theory of online algorithms
resulted in a sequence of works~\cite{KKR94,BKRS00,BBM06} which culminated in the following
threshold phenomenon from~\cite{BLMN05-main} (see
also~\cite{BLMN05-dic,BLMN05-low,CK05} for related results):
\begin{theorem}[\cite{BLMN05-main}]\label{thm:blmn}
For $D>1$ there exist $a(D),A(D)\in (0,\infty)$ and $b(D),B(D)\in
(0,1)$ with the following properties:
\begin{enumerate}
\item If $D\in (1,2)$ then any $n$-point metric space has a subset of cardinality $\ge
a(D)\log n$ that embeds with distortion $D$ into Hilbert space. On the other hand, there exist
arbitrarily large $n$-point metric spaces $X_n$ with the property that any
$Y\subseteq X_n$ that embeds into Hilbert space with distortion $D$
necessarily satisfies $|Y|\le A(D)\log n$.
\item If $D\in (2,\infty)$ then any $n$-point metric space has a subset of cardinality $\ge
n^{1-b(D)}$ that embeds with distortion $D$ into Hilbert space. On the
other hand, there exist arbitrarily large $n$-point metric spaces
$X_n$ with the property that any $Y\subseteq X_n$ that embeds into Hilbert space
with distortion $D$ necessarily satisfies $|Y|\le n^{1-B(D)}$.
\end{enumerate}
\end{theorem}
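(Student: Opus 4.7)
My approach focuses on the positive assertions of Theorem \ref{thm:blmn}; the negative assertions are obtained from independent explicit constructions (expander graph metrics for part (2) and complete binary trees or related Laakso-type spaces for part (1)) and are essentially combinatorial in nature. The starting point for the positive direction is the classical fact that every finite ultrametric embeds isometrically into Hilbert space, so it suffices to exhibit, for each $n$-point metric space $(X,d)$, a random subset $Y \subseteq X$ supporting an ultrametric $\rho$ on $Y$ with $d \le \rho \le D\cdot d$. The target sizes are $\E[|Y|] \ge n^{1-b(D)}$ for $D>2$ and $\E[|Y|] \ge a(D)\log n$ for $D<2$.

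The subset $Y$ is produced by a randomized hierarchical fragmentation. Fix a ratio $s>1$ depending only on $D$, and draw a single random shift $\tau$ from a prescribed distribution on a fixed interval. This produces scales $r_k = \tau\cdot s^{-k}\cdot\diam(X)$ for $k=0,1,2,\ldots$; crucially, the \emph{distribution} of the $r_k$ depends only on $D$ and $\diam(X)$, not on the internal geometry of $X$, which is the ``scale-oblivious'' aspect. At stage $k$, the surviving set is partitioned by a deterministic greedy ball-carving into clusters of diameter $\le r_k$, and every point whose $d$-ball of radius $r_k/D$ is not entirely inside its own cluster is deleted. The ultrametric $\rho(x,y)$ is defined as the smallest $r_k$ such that $x$ and $y$ lie in a common stage-$k$ cluster; the padding rule forces $\rho \le D\cdot d$, while $\rho \ge d$ is automatic from the cluster diameter bound.

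The technical heart is a survival probability bound. For each point $x$ and each stage $k$, a padded-decomposition lemma estimates the deletion probability of $x$ at that stage by a logarithm of the ratio of cardinalities of concentric balls around $x$ at radii proportional to $r_k$. Telescoping this bound over the relevant stages (those $k$ with $r_k$ between $\diam(X)$ and the minimum interpoint distance) and averaging over $\tau$ yields $\Pr[x \in Y] \ge n^{-b(D)}$ with $b(D)\to 0$ as $D\to\infty$, and then $\E[|Y|] \ge n^{1-b(D)}$ follows by linearity of expectation. The case $D \in (1,2)$ is inherently outside the reach of any ultrametric strategy --- any metric containing a geodesic midpoint has distortion at least $2$ to every ultrametric --- so there one must combine the fragmentation idea with a direct Hilbertian embedding at the base level, and the stronger per-stage guarantee available when $D<2$ yields only a logarithmic surviving set.

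The main obstacle I anticipate is calibrating the scale distribution carefully enough that the exponent $b(D)$ tends to $0$ as $D \to 2^+$, so as to capture the entire polynomial regime right up to the phase transition at $D = 2$. In this regime the padding radius $r_k/D$ approaches $r_k/2$, which is precisely where the standard padded-decomposition estimate degrades, so showing that the per-stage deletion rate remains uniformly controlled --- and that the cumulative effect of $\Theta(\log n)$ stages still leaves a polynomially large surviving set --- requires a delicate choice of the distribution of $\tau$, along with a tight version of the padding lemma sensitive to the $1/D$ padding ratio rather than only to a coarse constant.
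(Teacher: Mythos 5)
This statement is a result quoted from~\cite{BLMN05-main}; the paper does not reprove it in full, but gives a new proof of the positive half of part (2) via Theorem~\ref{thm:theta}, which is the relevant comparison. Your outline correctly identifies the crucial structural shift away from Ramsey partitions --- namely, fragmenting only the surviving set at each stage, so the ultrametric is defined on $Y$ alone rather than on all of $X$ --- and correctly uses a single random multiplicative shift of geometric scales together with a telescoping survival-probability argument. In that sense the high-level architecture matches Section~\ref{sec:frag}.

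However, there are two concrete gaps. First, the partition mechanism: you propose a \emph{deterministic} greedy ball-carving with the only randomness coming from the scale shift $\tau$, and appeal to an unproved ``padded-decomposition lemma'' to control deletion probabilities. It is far from clear that deterministic carving with a random radius alone yields the bound you state; CKR/FRT-type padding estimates use randomization in the choice of cluster centers (or a random permutation), not just in the radius. The paper sidesteps this entirely by carving via an i.i.d.\ sequence $\{x_n\}$ of random centers (Lemma~\ref{lem:frag}), which gives the \emph{exact} identity $\Pr[x\in A]=\mu(B(x,r))/\mu(B(x,R))$; this clean formula, not an approximate padding estimate, is what makes the telescoping and the subsequent Jensen step work. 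Second, your distortion bookkeeping is off: with clusters of diameter $\le r_k$, padding radius $r_k/D$, and $\rho(x,y)$ equal to the finest common-cluster scale, one gets $d(x,y)\ge r_{k+1}/D=r_k/(sD)$ while $\rho(x,y)=r_k$, so the distortion is $sD$, not $D$; the paper avoids this by using the pair of scales $r_n$ and $r_n+2r_{n-1}/D$ in Lemma~\ref{lem:iterated}. Finally, the step you flag as ``the main obstacle'' --- calibrating the scale distribution so the exponent vanishes as $D\to 2^+$ --- is precisely the content of the admissible-exponent machinery (Definition~\ref{admis}, Theorems~\ref{coro:main} and~\ref{thm:compute admissible}) and is where the real work lies; leaving it unresolved leaves the proof incomplete exactly where the phase transition at $2$ must be reached. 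As a smaller point, your remark that the $D\in(1,2)$ regime requires a non-ultrametric target is not accurate: the positive results cited in the paper for $D<2$ also produce subsets embedding into ultrametrics (the extracted subset simply avoids near-geodesic triples), so no ``direct Hilbertian embedding at the base level'' is needed.
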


Note that the first assertion of part $(1)$ of Theorem~\ref{thm:blmn} is just a restatement of the Bourgain-Figiel-Milman nonlinear Dvoretzky theorem~\cite{BFM86}.

All the positive embedding results quoted above are actually stronger than embeddings into Hilbert space: they produce subsets which embed with distortion $D$ into an {\em ultrametric}. Recall that a metric space $(U,\rho)$ is an ultrametric if  $\rho(u,v)\le \max\{\rho(u,w),\rho(w,v)\}$ for every $u,v,w\in U$. Separable ultrametrics embed isometrically into Hilbert space~\cite{VT79}, so the problem of finding a subset of a metric space which embeds with distortion $D$ into an ultrametric is a (strictly)  stronger statement than the nonlinear Dvoretzky problem. The fact that the embeddings of Theorem~\ref{thm:blmn} are into ultrametrics is crucial for its applications. Note, however, that the impossibility results in Theorem~\ref{thm:blmn} rule out embeddings into Hilbert space, and not just embeddings into ultrametrics.

The proofs of the nonlinear Dvoretzky theorems in~\cite{BFM86,KKR94,BKRS00,BBM06,BLMN05-dic,BLMN05-low,CK05} proceed via \emph{deterministic} constructions. In~\cite{MN07} a new approach to the nonlinear Dvoretzky problem was introduced, based on a \emph{probabilistic} argument which is closer in spirit to the proofs of the classical Dvoretzky theorem. This randomized approach, called the method of {\em Ramsey partitions}, has three main advantages. First, it leads to new algorithmic applications of the nonlinear Dvoretzky theorem which are very different from the applications in~\cite{KKR94,BKRS00,BBM06,BLMN05-main}; we shall briefly describe one of these applications in Section~\ref{sec:limitation}. Second, Ramsey partitions yield a major simplification of the proof of part $(2)$ in Theorem~\ref{thm:blmn} for sufficiently large values of $D$ (this part of Theorem~\ref{thm:blmn} is by far the most complicated part of its proof in~\cite{BLMN05-main}). Third, the bound on the exponent $b(D)$ obtained in~\cite{MN07} is asymptotically sharp as $D\to \infty$, unlike the bound in~\cite{BLMN05-main}, which is off by a logarithmic factor. Specifically, \cite{MN07} yields $b(D)\lesssim 1/D$, which is optimal up to the implied universal constant due to the bound $B(D)\gtrsim 1/D$ of~\cite{BLMN05-main}.

 An obvious question, raised in~\cite{MN07}, suggests itself: can the randomized approach of~\cite{MN07} yield a proof Theorem~\ref{thm:blmn} in which the target distortion $D>2$ is allowed to go all the way down to the phase transition at $2$? The main result of~\cite{MN07} states that for $D>2$, any $n$-point metric space $X$ has a subset $Y\subseteq X$ with $|Y|\ge n^{1-128/D}$ which embeds with distortion $D$ into an ultrametric. \cite{MN07} did not attempt to optimize the constant $128$ in this result, and indeed by a more careful analysis of the arguments of~\cite{MN07} one can ensure that $|Y|\ge n^{1-16/(D-2)}$ (even this estimate can be slightly improved, but not by much). In any case, it is clear that these statements become vacuous for $D$ smaller than a universal constant close enough to $2$. Thus, the full  $D>2$ range of part $(2)$ of Theorem~\ref{thm:blmn} still required the use of the deterministic approach of~\cite{BLMN05-main}.

 It was stated in~\cite{MN07} that there does not seem to be a simple way to use Ramsey partitions to handle distortions arbitrarily close to $2$. In Section~\ref{sec:limitation} we make a very simple observation which proves that if $D<3$, then the method of Ramsey partitions cannot yield a subset $Y$ as above of size tending to $\infty$ with $n$. Thus, in fact, it is {\em impossible} to approach the phase transition at $2$ using Ramsey partitions.
 Here we present a new randomized approach, building on the multiplicative telescoping argument of~\cite{MN07}, which proves the nonlinear Dvoretzky theorem for any distortion $D>2$. Specifically, we prove the following result:
 \begin{theorem}\label{thm:theta}
 For every $D>2$, any $n$-point metric space has a subset of cardinality $n^{\theta(D)}$ which embeds with distortion $D$ into an ultrametric. Here $\theta=\theta(D)\in (0,1)$ is the unique solution of the equation $$\frac{2}{D}=(1-\theta)\theta^{\frac{\theta}{1-\theta}}.$$
 \end{theorem}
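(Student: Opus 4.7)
The plan is to build a random hierarchical partition of $(X,d)$, $|X|=n$, by an iterative ball-carving procedure driven by random radii, and to show that a large subset $Y\subseteq X$ ``survives'' the process in the sense that at every level its local ball remains entirely inside its cluster. Fix a decreasing sequence of deterministic reference radii $\Delta_k$ with ratio $\Delta_k/\Delta_{k+1}=D$ and $\Delta_0>\diam(X)$, and draw an iid sequence $U_1,U_2,\ldots$ from a distribution $\mu$ supported on $[0,1]$ that depends only on $D$. At stage $k$ carve every current cluster by picking points one-by-one and peeling off a ball of random radius $r_k\in[\Delta_{k+1},\Delta_k]$ about the chosen point, where $r_k$ is determined by $U_k$. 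Declare $x\in X$ to be surviving if, for every $k$, the ball $B(x,\Delta_{k+1})$ is entirely contained in the cluster of $x$ at stage $k$. The resulting laminar family of partitions on $Y$ produces an ultrametric $\rho$ with $d\le\rho\le D\,d$, since consecutive separation scales differ by a factor of exactly $D$.

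The heart of the argument is a multiplicative telescoping bound for $\Pr[x\in Y]$. At stage $k$, conditional on everything up to that point, the ball $B(x,\Delta_{k+1})$ fails to be padded only if the random radius $r_k$ lands in a narrow band around $d(x,y)$ for some active center $y$; a layer-cake computation over these potential $y$'s bounds the conditional failure probability by a quantity of the form $\alpha(\mu)\cdot\log\bigl(|B(x,\Delta_k)|/|B(x,\Delta_{k+1})|\bigr)$, where $\alpha(\mu)$ is an explicit functional of the density of $\mu$. Telescoping across scales --- whose sum collapses because $|B(x,\Delta_k)|=1$ for large $k$ and $|B(x,\Delta_k)|=n$ for small $k$ --- yields $\Pr[x\in Y]\ge n^{-\alpha(\mu)}$, and linearity of expectation produces a realization with $|Y|\ge n^{1-\alpha(\mu)}$. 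The final step is to optimize $\mu$ over distributions on $[0,1]$: I expect the extremizer to be a specific one-parameter power-law density whose Euler--Lagrange condition yields exactly the equation $\tfrac{2}{D}=(1-\theta)\theta^{\theta/(1-\theta)}$ with $\theta=1-\alpha(\mu)$.

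I expect the main obstacle to be proving a single-scale padding lemma sharp enough to recover the equation precisely, rather than only up to universal constants. The cruder estimates implicit in the Ramsey-partition approach of \cite{MN07} yield only $\theta(D)=1-O(1/D)$ and collapse near the phase transition at $D=2$, so the argument hinges on identifying the exact leading constant in the conditional failure probability and pairing it with the correct density for $\mu$; the factor $2$ in the equation plausibly arises because an active center $y$ threatens a padded ball both from inside and outside, contributing twice to the annulus where $r_k\approx d(x,y)$. A secondary technical point is the finite termination of the procedure (clusters eventually become singletons, so only finitely many stages are needed) together with the verification that $\mu$ is genuinely independent of the metric --- the ``scale-oblivious'' feature that is also essential for the matching sharpness statement foreshadowed in the abstract.
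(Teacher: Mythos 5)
Your approach is, at its core, a random ball-carving hierarchical \emph{partition} of $X$ together with a padding (``survival'') criterion: this is precisely the Ramsey partition framework of~\cite{MN07,CKR04,FRT04}. The paper proves in Section~\ref{sec:limitation} that this framework cannot reach distortions below $3$, let alone the phase transition at $2$. The obstruction is structural rather than technical: a partition tree on $X$ together with a padded subset $Y$ automatically produces an ultrametric $\rho$ defined on \emph{all} of $X$ satisfying $d(x,y)\le\rho(x,y)\le D\,d(x,y)$ for every $x\in X$ and $y\in Y$. By the argument of~\cite{MN07}, such an object yields an approximate distance oracle with stretch $D$, query time $O(1)$, and size $O(n^{2-\theta})$ whenever $|Y|\gtrsim n^\theta$; the Thorup--Zwick lower bound~\cite{TZ05} says such an oracle requires $\gtrsim n^2$ space once $D<3$, forcing $\theta=o(1)$. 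No optimization over the density $\mu$ of your random radii can escape this, so the scheme you describe cannot prove the theorem for $D\in(2,3)$.

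The paper's route is genuinely different. Lemma~\ref{lem:frag} is a \emph{fragmentation} step that discards points instead of partitioning $X$: a point $x$ survives stage $n$ only if the first random center landing in the larger ball $B\bigl(x,r_n+\tfrac{2r_{n-1}}{D}\bigr)$ also lands in the smaller ball $B(x,r_n)$; otherwise $x$ is removed from the process entirely. The resulting ultrametric lives only on the survivor set $S$, which is exactly what sidesteps the distance-oracle obstruction, and the distortion is governed by the ratio of the two radii used at a single stage, $\bigl(r_n+\tfrac{2r_{n-1}}{D}\bigr)/r_n$, rather than by the inter-scale ratio $r_{n-1}/r_n$. Two further places where your quantitative setup diverges from the paper. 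First, the optimizing radii $r_n=(1-\beta)^{(U+n-1)/\beta}$ of Theorem~\ref{thm:compute admissible} are \emph{not} independent across scales: they form a deterministic geometric progression of ratio $(1-\beta)^{1/\beta}$ (which is not $1/D$) shifted by a single uniform random variable $U$, so that the ``bad'' events at distinct scales occupy disjoint subintervals of $[0,1]$ and their probabilities sum to exactly $\beta$; iid radii as in your proposal would lose the sharp constant. Second, the exact per-stage survival probability is the ratio $\mu(B(x,r))/\mu(B(x,R))$ (a one-line geometric-series computation in the proof of Lemma~\ref{lem:frag}), and the infinite product over stages is handled by Jensen's inequality applied to its logarithm; a per-stage failure bound of the form $\alpha(\mu)\log\bigl(|B(x,\Delta_k)|/|B(x,\Delta_{k+1})|\bigr)$ as you suggest would degrade by a constant in passing from $\prod_k(1-\text{failure}_k)$ to an exponential and would therefore not recover the exact equation $\tfrac{2}{D}=(1-\theta)\theta^{\theta/(1-\theta)}$.
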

 It is elementary to check that $\theta(D)\ge 1-\frac{2e}{D}$ for all $D>2$, and that as $\e\searrow 0$ we have $\theta(2+\e)=\frac{\e}{2\log(1/\e)}+O\left(\frac{\e\log\log(1/\e)}{(\log(1/\e))^2}\right)$. Theorem~\ref{thm:theta} yields a very short proof  (complete details in 3 pages) of the the nonlinear Dvoretzky theorem for all distortions $D>2$, with the best known bounds on the exponent $\theta(D)$. In a sense that is made precise in Section~\ref{sec:describe frag}, the above value of $\theta(D)$ is optimal for our method.

 \subsection{Approximate distance oracles and limitations of Ramsey partitions}\label{sec:limitation} We recall some terminology and results from~\cite{MN07}. Fix $\delta\in (0,1)$ and let $(X,d)$ be an $n$-point metric space of diameter $1$. A sequence $\{\P_k\}_{k=0}^\infty$ of partitions   of $X$ is called a \emph{partition tree} of rate $\delta$ if $\P_0$ is the trivial partition $\{X\}$, for all $k\ge 0$ $\P_{k+1}$ is a refinement of $\P_k$, and each set in $\P_k$ has diameter at most $\delta^k$.


The main tool in~\cite{MN07} is {\em random partition trees}. Let $\Pr$ be a probability distribution over partition trees of rate $\delta$. For $\ell>0$ consider the the random subset $Y\subseteq X$ consisting of those $x\in X$ such that for {\em all} $k\in \N$ the entire closed ball $B(x,\delta^k/\ell)$ is contained in the element of $\P_k$ to which $x$ belongs. Assume that each $x\in X$ falls in $Y$ with $\Pr$-probability at least $n^{-\beta}$. Then $\E\left[|Y|\right]\ge n^{1-\beta}$. Define for distinct $x,y\in X$ the random quantity $\rho(x,y)=\delta^{k(x,y)}$, where $k(x,y)$ is the largest integer $k$ such that both $x$ and $y$ fall in the same element of $\P_k$. Then $\rho$ is an ultrametric on $X$, and for $x\in X$ and $y\in Y$ we have $\rho(x,y)\ge d(x,y)\ge \frac{\delta}{\ell}\rho(x,y)$ \cite[Lem.\ 2.1]{MN07}. Thus,  on $Y$, $\rho$ is  bi-Lipschitz equivalent to the original metric $d$ with distortion $\le \ell/\delta$. But more is true: the ultrametric $\rho$ is defined on all of $X$, and approximates up to a factor $\le \ell/\delta$ all distances from points of $Y$ to {\em all the other points of $X$}.


 In~\cite{MN07} random partition trees were constructed with the desired bounds on $\beta$ and the distortion $\ell/\delta$. It was shown in~\cite{MN07} that the existence of an ultrametric $\rho$ on $X$ which has the above property of approximating distances from points of a large subset $Y\subseteq X$ to all other points of $X$, has a variety of implications to the theory of data structures. Here we need to briefly  recall the connection to {\em approximate distance oracles}.

An $n$-point metric space $(X,d)$ can be thought of as table of $\binom{n}{2}$ numbers, corresponding to the distances between all unordered pairs $x,y\in X$. In the approximate distance oracle problem the goal is, given $D>1$, to do ``one time work" (preprocessing) that produces a data structure (called an approximate distance oracle) of size $o(n^2)$ such that given a ``query" $x,y\in X$, one can quickly produce a number $E(x,y)$ satisfying $d(x,y)\le E(x,y)\le Dd(x,y)$. We call $D$ the {\em stretch} of the approximate distance oracle.

The seminal work on approximate distance oracles is due to Thorup and Zwick~\cite{TZ05}, who showed that for all odd $D\in \N$ one can design a data structure of size $O(Dn^{1+2/D})$ using which one can compute in time $O(D)$ a number $E(x,y)$ satisfying $d(x,y)\le E(x,y)\le Dd(x,y)$. In~\cite{MN07} it was shown\footnote{This assertion is not stated explicitly in~\cite{MN07}, but it follows directly from the proof of~\cite[Th.\ 1.2]{MN07}: using the notation of~\cite{MN07}, as noted in the proof of~\cite[Th.\ 1.2]{MN07}, the ultrametric $\rho_j$ is only required to be defined, and satisfy the conclusion of~\cite[Lem.\ 4.2]{MN07},  on $X_{j-1}$ and not on all of $X$. This property is guaranteed by our assumption. Thus there is no loss of constant factor since for the purpose of~\cite[Th.\ 1.2]{MN07} (unlike other applications of~\cite[Lem.\ 4.2]{MN07} in~\cite{MN07}), we do not need to use~\cite[Lem.\ 4.1]{MN07}.} that if every $n$-point metric space $(X,d)$ admits an ultrametric $\rho$ (defined on all of $X$) and a subset $Y\subseteq X$ with $|Y|\ge n^{1-c/D}$, such that for every $x\in X$ and $y\in Y$ we have $d(x,y)\le \rho(x,y)\le Dd(x,y)$, then any $n$-point metric space can be preprocessed to yield a data structure of size $O(n^{1+c/D})$ using which one can compute in time $O(1)$ a number $E(x,y)$ satisfying $d(x,y)\le E(x,y)\le Dd(x,y)$. A key new point here is that the query time is a universal constant, and does not depend on $D$ as in~\cite{TZ05}.

It was also shown in~\cite{TZ05} that any approximate distance oracle that answers distance queries with stretch $D<3$ must use $\gtrsim n^2$ bits of storage. Combining this lower bound with the above construction of~\cite{MN07}, we see that if $D<3$ there must exist arbitrarily large $n$-point metric spaces $(X_n,d_n)$ such that if $\rho$ is an ultrametric on $X_n$ and $Y\subseteq X_n$ is such that $d(x,y)\le \rho(x,y)\le Dd(x,y)$ for all $x\in X_n$ and $y\in Y$, then $|Y|\lesssim n^{o(1)}$. It is actually not difficult to unravel the arguments of~\cite{TZ05,MN07} to give a direct proof of the fact that Ramsey partitions cannot yield the nonlinear Dvoretzky theorem for distortions in $(2,3)$. We will not do so here since it would be a digression from the topic of the present paper; the purpose of the above discussion is only to explain why a method other than Ramsey partitions is required in order to to go all the way down to distortion $2$.

 \subsection{The fragmentation procedure and admissible exponents}\label{sec:describe frag}

Having realized that a proof of part $(2)$ of Theorem~\ref{thm:blmn} for $D$ arbitrarily close to $2$ cannot produce a large $Y\subseteq X$ and an ultrametric $\rho$ that is defined on all of $X$ and satisfies $d(x,y)\le \rho(x,y)\le Dd(x,y)$ for all $x\in X$ and $y\in Y$, it is natural to try to design a procedure which results in an ultrametric that is defined on the subset $Y$ alone. This is what our fragmentation procedure does.

In order to state our main results, we require the following definition:

\begin{definition}[Admissible exponent]\label{admis}  Fix $D >2$. We say that $\sigma > 0$ is an \emph{admissible exponent} for $D$ if there exist a sequence of (not necessarily independent) random variables
$ 1 = r_0 \ge r_1 \ge r_2 \ge \ldots > 0$
with $\lim_{n\to \infty} r_n=0$, such that
for every real number $r > 0$, we have
\begin{equation}\label{sigma-def}
 \sum_{n=1}^\infty \Pr\left[ r_n < r \le r_n + \frac{2r_{n-1}}{D} \right] \leq \sigma.
\end{equation}
\end{definition}
\begin{theorem}[Ultrametrics via admissible exponents]\label{coro:main}  Fix $D >2$, and let $\sigma > 0$ be an admissible exponent for $D$.  Let
$X = (X,d)$ be a finite metric space.  Then there exists a subset $S$ of $X$ of cardinality $|S| \geq |X|^{1-\sigma}$ which embeds with distortion  $D$ into an ultrametric.
\end{theorem}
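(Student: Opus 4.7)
The plan is to construct a random hierarchical partition of $X$ using the random admissible scales $(r_n)$, declare $S\subseteq X$ to be the set of ``survivor'' points never placed in a discarded padding shell, and show (i) that $S$ embeds into an ultrametric with distortion $\le D$ via the laminar tree structure, and (ii) that $|S|\ge |X|^{1-\sigma}$ with positive probability.

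\emph{Setup and algorithm.} After rescaling so that $\diam(X)\le 1=r_0$, I process pieces level by level. At level $0$ the only piece is $X$. Inductively, each piece $P$ at level $n-1$ (which for $n\ge 2$ is contained in a ball of radius $r_{n-1}$, hence has diameter at most $2r_{n-1}$) is partitioned at level $n$ as follows: draw an independent uniform random permutation $\pi_P$ of $P$ (independent of the scales); iteratively, let $c$ be the $\pi_P$-smallest point still in the current remnant of $P$, declare $B(c,r_n)\cap\text{remnant}$ a new piece at level $n$, discard the shell $\bigl(B(c,r_n+2r_{n-1}/D)\setminus B(c,r_n)\bigr)\cap\text{remnant}$, and replace the remnant by $\text{remnant}\setminus B(c,r_n+2r_{n-1}/D)$, until the remnant is empty.

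\emph{Distortion.} Set $\rho(x,y)=2r_{n-1}$ where $n$ is the smallest level at which $x$ and $y$ lie in distinct pieces. The nestedness of the partition tree makes $\rho$ an ultrametric. By construction, two distinct pieces created within the same parent at level $n$ are separated by more than $2r_{n-1}/D$ (the second piece is carved from the remnant after the first's padded ball is removed), while their common level-$(n-1)$ parent has diameter at most $2r_{n-1}$. Hence $d(x,y)\le\rho(x,y)<D\cdot d(x,y)$ on $S$, giving distortion at most $D$.

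\emph{Size.} It suffices, modulo routine small-$|X|$ cases (any two-point subset of $X$ is isometrically an ultrametric), to prove $\E|S|\ge(1-\sigma)|X|$, which exceeds $|X|^{1-\sigma}$ once $|X|$ is larger than a $\sigma$-dependent constant. By linearity, this reduces to the per-point bound $\Pr[x\notin S]\le\sigma$ for each $x\in X$. Since $x$ is discarded at most once, $\Pr[x\notin S]=\sum_{n\ge 1}\Pr[D_n(x)]$, where $D_n(x)$ is the event that the $\pi$-first element $c$ of $P_n\cap B(x,r_n+2r_{n-1}/D)$ satisfies $r_n<d(x,c)\le r_n+2r_{n-1}/D$, and $P_n$ is the level-$(n-1)$ piece containing $x$. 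By independence of $\pi$ from the scales, conditionally on the scales and $P_n$ this $\pi$-first element is uniform on $Z_n:=P_n\cap B(x,r_n+2r_{n-1}/D)$, so $\Pr[D_n(x)\mid\text{scales},P_n]=|A_n|/|Z_n|$ with $A_n:=Z_n\setminus B(x,r_n)$. Summing over $n$ and writing $|A_n|/|Z_n|=\sum_{y\in A_n}|Z_n|^{-1}$, one interchanges the summations and applies \eqref{sigma-def} with $r=d(x,y)$ for each candidate captor $y$; the weights $|Z_n|^{-1}$ must then absorb the $|X|$-fold union over $y$ to yield the total bound $\sigma$.

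\emph{Main obstacle.} The delicate step is this final cancellation: a naive union bound over $y\in X\setminus\{x\}$ yields the useless estimate $(|X|-1)\sigma$. The fix must leverage the fact that at each level there is a \emph{single} captor (the $\pi$-min of $Z_n$), so the contributions of the various $y$'s come with weights $1/|Z_n|$ that prevent the $|X|$-factor loss; combining this structural point with~\eqref{sigma-def} should produce $\Pr[x\notin S]\le\sigma$. An alternative route, perhaps more robust, is a multiplicative-telescoping recursion on the expected sizes of the pieces (in the spirit of~\cite{MN07}) to directly yield $\E|S|\ge|X|^{1-\sigma}$ without passing through a pointwise estimate; in either case, the technical heart of the argument is converting the level-wise admissibility bound~\eqref{sigma-def} into a global bound on the probability of discard.
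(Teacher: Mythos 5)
Your distortion argument is fine and your fragmentation scheme is a sensible CKR/FRT-style variant, but the probabilistic step you identify as the ``main obstacle'' is a genuine gap, not a bookkeeping issue: the pointwise bound $\Pr[x\notin S]\le\sigma$ is false and cannot be rescued. If it held, you would get $\E[|S|]\ge(1-\sigma)|X|$, a linear-in-$|X|$ lower bound, while the theorem delivers only $|X|^{1-\sigma}$ --- and, as the impossibility results of \cite{BLMN05-main} show, a linear bound is impossible when $D$ is fixed and $|X|$ grows. The cancellation you hope for does not materialize. After writing $\Pr[x \text{ cut at level }n\mid\text{scales},P_n]=|A_n|/|Z_n|=\sum_{y\in A_n}1/|Z_n|$ and interchanging sums, the factor $1/|Z_n|$ does \emph{not} absorb the union over $y$: as $y$ ranges over potential captors at different scales, these weights exhibit precisely the harmonic-sum behavior responsible for the $O(\log n)$ padding loss in CKR/FRT, and you end up with something of order $\sigma\log|X|$ rather than $\sigma$.

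The ``alternative route'' you mention and defer --- multiplicative telescoping --- is the one that works, and it is how the paper proceeds. Two points are essential and both differ from your setup. First, in the paper's single-scale Fragmentation Lemma (Lemma~\ref{lem:frag}) the candidate captors $x_1,x_2,\ldots$ are drawn i.i.d.\ from the \emph{ambient} measure $\mu$ on all of $X$, not by permuting the current piece; this makes $\Pr[x\in A]=\mu(B(x,r))/\mu(B(x,R))$ an \emph{exact} identity independent of the piece, so the survival probabilities across levels multiply to the telescoping product $\prod_n \mu(B(x,r_n))/\mu(B(x,r_n+2r_{n-1}/D))$ and (by the probabilistic method at each level) one can pass to a deterministic nested chain $S_0\supseteq S_1\supseteq\cdots$ achieving it. Second, the role of the scale-randomness and of \eqref{sigma-def} is different from what you attempt: one takes expectations of \eqref{ep-mass-finite} over the random $\{r_n\}$, applies Jensen to turn the product into a sum of logarithms, rewrites $\E[\log|B(x,r)|]$ as a telescoping sum over the jump radii $t_j(x)$ of $t\mapsto|B(x,t)|$, and then \eqref{sigma-def} bounds each coefficient $\sum_n\Pr[r_n<t_j(x)\le r_n+2r_{n-1}/D]$ by $\sigma$, so the exponent telescopes to $\sigma\log|X|$. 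This yields $\Pr[x\in S]\ge|X|^{-\sigma}$, hence $\E[|S|]\ge|X|^{1-\sigma}$. The point you are missing is that $\sigma$ must land in the \emph{exponent}, not as a bound on a single-discard probability; that is exactly what the Jensen/telescoping argument delivers and what a per-point union bound over captors cannot.
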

Let $\sigma^*(D)$ denote the infimum of those $\sigma>0$ which are admissible exponents for $D$. Due to Theorem~\ref{coro:main}, we would like to estimate $\sigma^*(D)$. In fact, it turns out that we can compute it exactly; the following theorem, in combination with Theorem~\ref{coro:main}, implies Theorem~\ref{thm:theta}.

\begin{theorem}[Optimization of admissible exponents]\label{thm:compute admissible} For every $D>2$ we have $\sigma^*(D)=\beta$, where $\beta\in (0,1)$ is the unique solution of the equation
\begin{equation}\label{eq:def:beta}
\frac{2}{D}=\beta(1-\beta)^{\frac{1-\beta}{\beta}}.
\end{equation}
Moreover, $\sigma^*(D)$ is attained at the following random variables: $r_0=1$, and for $n\in \N$,
\begin{equation}\label{eq: r_n}
r_n\stackrel{\mathrm{def}}{=} (1-\beta)^{\frac{U+n-1}{\beta}},
\end{equation}
where $U$ is a random variable that is uniformly distributed on the interval $[0,1]$. For this choice of $ 1 = r_0 \ge r_1 \ge r_2 \ge \ldots > 0$, the supremum of the left hand side of~\eqref{sigma-def} over $r>0$ equals the value of $\beta$ in~\eqref{eq:def:beta}.
\end{theorem}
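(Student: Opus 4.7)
The plan is to prove $\sigma^*(D) \leq \beta$ and $\sigma^*(D) \geq \beta$ separately. For the upper bound I verify that the explicit sequence \eqref{eq: r_n} achieves $\sup_{r>0}\sum_n\Pr[\cdot]=\beta$. Setting $\eta := (1-\beta)^{1/\beta}$, so $\eta^\beta = 1-\beta$ and \eqref{eq:def:beta} reads $2/D = \beta\eta^{1-\beta}$, I compute for $n \geq 2$:
\begin{equation*}
r_n + \tfrac{2r_{n-1}}{D} \;=\; \eta^{U+n-2}\bigl(\eta+\beta\eta^{1-\beta}\bigr) \;=\; \eta^{U+n-2}\eta^{1-\beta}\bigl(\eta^\beta+\beta\bigr) \;=\; \eta^{U+n-1-\beta}.
\end{equation*}
Writing $r=\eta^t$ for $t\in\R$ and using $\eta\in(0,1)$, the event in \eqref{sigma-def} becomes $U\in(\beta t - n + 1,\,\beta t - n + 1 + \beta]$, a $\beta$-length interval. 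As $n$ ranges over $\N$, these intervals are pairwise disjoint and shifted by unit integers, so their union forms a translate of the period-$1$ arrangement $\bigcup_{k\in\Z}(k,k+\beta]$. Intersected with $[0,1]$ this has Lebesgue measure exactly $\beta$ for $t\geq 0$ and strictly less for $t<0$; hence the supremum in \eqref{sigma-def} equals $\beta$, giving $\sigma^*(D)\leq \beta$.

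For the matching lower bound I pass to log-scale: $L_n := -\log r_n$, $\Delta_n := L_n - L_{n-1}$, and $t := -\log r$. The event in \eqref{sigma-def} becomes $t\in J_n := [M_n, L_n)$ with $M_n = L_{n-1} - \log(e^{-\Delta_n}+2/D)$, and $|J_n| = \psi(\Delta_n)$ where $\psi(\Delta) := \log(1+(2/D)e^\Delta)$. The key step is the pointwise inequality
\begin{equation*}
\psi(\Delta) \;\geq\; \beta\Delta \qquad\text{for all } \Delta \geq 0,
\end{equation*}
with equality at $\Delta^* := -\log(1-\beta)/\beta$; it follows from convexity of $\psi$ on $[0,\infty)$ together with $\psi(\Delta^*)=-\log(1-\beta)=\beta\Delta^*$ and $\psi'(\Delta^*)=1-(1-\beta)=\beta$, both immediate from \eqref{eq:def:beta}. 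Setting $F(t):=\sum_n\Pr[t\in J_n]$, $\sigma := \sup_t F(t)$, $K_0:=\log(1+2/D)$, $N(T):=\max\{n:L_n\leq T\}$, and $K:=\min\{m:L_m\geq K_0\}$, I observe that for $n>K$ one has $L_{n-1}\geq K_0$, hence $M_n\geq 0$ and $J_n\subseteq[0,L_n]$. Restricting the integral of $F$ over $[0,T]$ to indices $n\in(K,N(T)]$ and applying $|J_n|\geq\beta\Delta_n$ telescopes to
\begin{equation*}
\int_0^T F(t)\,dt \;\geq\; \beta\,\E(L_{N(T)}-L_K) \;\geq\; \beta\bigl(T - \E\Delta_{N(T)+1} - K_0 - \E\Delta_K\bigr).
\end{equation*}
Combined with $\sup_t F(t) \geq T^{-1}\int_0^T F(t)\,dt$, obtaining $\sigma\geq\beta$ reduces to bounding the boundary terms by $o(T)$.

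The hard part is controlling those boundary terms. For the burn-in count $\E K$, I observe that each of $J_1,\ldots,J_{K-1}$ lies in $[-K_0,K_0]$ and has length $\psi(\Delta_n)\geq\psi(0)=K_0$, so admissibility yields $(\E K-1)K_0 \leq \int_{-K_0}^{K_0}F(t)\,dt \leq 2K_0\sigma$ and hence $\E K \leq 2\sigma+1$, pinning the burn-in to $O(1)$. For the overshoot jumps $\E\Delta_K$ and $\E\Delta_{N(T)+1}$ I use a dichotomy: if such a jump is ever $\Omega(T)$ with non-negligible probability, then the associated interval has length $\geq\beta\cdot\Omega(T)$ and alone contributes enough mass to $F$ on a macroscopic $t$-window of length $\Omega(T)$ to force the average $T^{-1}\int F\,dt$, and hence $\sup_t F(t)$, above $\beta$ directly; otherwise the jump is $o(T)$ in expectation and the main inequality closes. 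Either branch yields $\sigma \geq \beta$, completing the proof.
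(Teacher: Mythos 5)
Your upper bound (showing the explicit sequence~\eqref{eq: r_n} achieves supremum exactly $\beta$) is correct and essentially identical to the paper's: both of you translate the events into a family of length-$\beta$ intervals shifted by unit integers and observe that their overlap with $[0,1]$ has measure at most $\beta$. Your pointwise inequality $\psi(\Delta)\ge\beta\Delta$ is also correct, and it is in spirit a ``$p\to 0$'' version of the quantity $\beta_p(\alpha)$ that the paper uses in its lower bound.

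The gap is in your control of the boundary terms in the lower bound, and it is a genuine one. Your argument reduces $\sigma\ge\beta$ to showing that $\E\Delta_K$ and $\E[T-L_{N(T)}]$ are $o(T)$, but neither is controlled by the admissibility hypothesis. Bounding $\E K\le 2\sigma+1$ does not bound $\E\Delta_K$: the number of burn-in steps can be $O(1)$ in expectation while the size of the jump $\Delta_K$ at exit time is arbitrarily heavy-tailed (even $\E\Delta_K=\infty$ is not ruled out). More seriously, the ``dichotomy'' for $\E\Delta_{N(T)+1}$ does not close. Consider the deterministic sequence $L_n=2^{n+1}-2$ (so $\Delta_n=2^n$). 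For $T$ just below $L_m$, the undershoot $T-L_{N(T)}$ is comparable to $T$, so $T^{-1}\int_0^T F\,dt$ is bounded away from $\sup_t F(t)$ for those $T$, and your inequality chain gives only $\sup F\ge c\beta$ for some $c<1$. The heuristic ``a jump of size $\Omega(T)$ with non-negligible probability $\delta$ forces the average above $\beta$'' fails because the associated long interval $J_{N(T)+1}$ sits at a \emph{random} location depending on $L_{N(T)}$, so for any fixed $t$ its contribution to $F(t)$ is only $O(\delta)$, not $\beta$. There is no cheap fix by choosing $T$ cleverly either, since $T$ must be chosen deterministically while $L_n$ is random.

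The paper sidesteps all of this by replacing your uniform weight $T^{-1}\mathbf{1}_{[0,T]}(t)$ with the weight $pr^{p-1}$ on $r\in(0,1+2/D]$ (equivalently $pe^{-pt}$ on $t\in[-K_0,\infty)$), which is \emph{summable without truncation}: $\int_0^{1+2/D}pr^{p-1}\,dr=(1+2/D)^p$. The telescoping then uses $(r_n+\tfrac{2r_{n-1}}{D})^p-r_n^p\ge\beta_p(2/D)(r_{n-1}^p-r_n^p)$, whose right-hand side sums exactly to $\beta_p(2/D)$ with no boundary leftovers because $r_0=1$ and $r_n\to 0$. This gives $\beta_p(2/D)\le\sigma(1+2/D)^p$ cleanly for every $p\in(0,1)$, and letting $p\to 0$ recovers $\beta\le\sigma$. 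Interestingly, the source file of the paper contains an abandoned attempt along the very lines you propose (truncating $r_n$ at level $\e$ and working in log-scale), which suggests the authors ran into the same boundary difficulty before switching to the $p$-th power weight. I would encourage you to adopt that device: your lemma $\psi(\Delta)\ge\beta\Delta$ is precisely the inequality $\log(1+\alpha x)\ge\beta(\alpha)\log x$ for $x\ge 1$, from which $(1+\alpha x)^p-1\ge p\log(1+\alpha x)\ge p\beta(\alpha)\log x$ gives $\liminf_{p\to 0}\beta_p(\alpha)\ge\beta(\alpha)$ by taking $x=e^{\Delta^*}$, so most of your work is reusable.
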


The construction of the subset $S$ in Theorem~\ref{coro:main} is most natural to describe in the context of compact metric spaces, though it will be applied here only to finite metric spaces. Throughout this paper a \emph{metric probability space} $(X,d,\mu)$ is a compact metric space $(X,d)$ equipped with a Borel probability measure $\mu$. For $x\in X$ and $r\ge 0$ we shall use the standard notation for (closed) balls: $B(x,r)=\{y\in X:\ d(x,y)\le r\}$.
To avoid degeneracies we assume that for every $r> 0$ we have $\mu(B(x,r)) > 0$, and that the function $x \mapsto \mu(B(x,r))$ is measurable.  Of course, these hypotheses are automatic in the case of finite metric spaces with uniform measure.

Fix a metric probability space $(X,d,\mu)$, normalized to have diameter $2$, and a decreasing sequence of radii $ 1 = r_0 \ge r_1 \ge r_2 \ge \ldots > 0.$ Define inductively a decreasing sequence of random subsets $X=S_0\supseteq S_1\supseteq S_2\ldots$ as follows. Having defined $S_i$, let $\{x_n\}_{n=1}^\infty$ be an i.i.d. sequence of points in $X$, each distributed according to $\mu$. The set $S_{i+1}$ is defined to be those points $x\in S_i$ for which the first point in the sequence $\{x_n\}_{n=1}^\infty$ that fell in $B\left(x,r_n+\frac{2r_{n-1}}{D}\right)$, actually fell in the smaller ball $B(x,r_n)$. Letting $S=\bigcup_{i=0}^\infty S_i$, we argue (see Lemma~\ref{lem:frag} and Lemma~\ref{lem:iterated}) that $(S,d)$ embeds with distortion $D$ into an ultrametric, and that,
\begin{equation}\label{eq:refine}
 \E\left[\mu(S)\right] \geq \int_X \left(\prod_{n=1}^\infty \frac{\mu(B(x,r_n))}{\mu\left(B\left(x,r_n + \frac{2r_{n-1}}{D}\right)\right)}\right)\ d\mu(x).
\end{equation}

So far we did not use the fact that the radii $\{r_n\}_{n=0}^\infty$ are themselves random. The additional randomness allows us to use a refinement of an idea of~\cite{MN07} in order to control the infinite product appearing in~\eqref{eq:refine} using Jensen's inequality (the corresponding step in~\cite{MN07} used the AM-GM inequality). This is how the notion of admissible exponent appears in Theorem~\ref{coro:main}; the details appear in Section~\ref{sec:frag}. Note that the proof of Theorem~\ref{thm:theta} is simple to describe: it follows the above outline with the specific sequence of random radii given in~\eqref{eq: r_n}. (Observe that this sequence of radii involves a choice of only one random number $U$, unlike the construction of~\cite{MN07}, and its predecessors~\cite{CKR04,FRT04},  in which $r_n$ was uniformly distributed on $[8^{-n}/4,8^{-n}/2]$, and the $\{r_n\}_{n=0}^\infty$ were independent random variables.)

The obvious weakness of the above approach is that the random radii $\{r_n\}_{n=0}^\infty$ are chosen without consideration of the particular geometry of the metric space $X$. It makes sense that in order to obtain sharper results one would need to investigate how different scales in $X$ interact, and reflect this understanding in a choice of radii which are not ``scale-oblivious". Theorem~\ref{thm:compute admissible} shows that in order to improve our bounds in Theorem~\ref{thm:theta} one would need to use a fragmentation procedure that is not scale-oblivious (or, find a way to control an expression such as~\eqref{eq:refine} without using Jensen's inequality; this seems quite difficult).

A particular question of interest in this context is as follows: for $D>2$ let $\theta^*(D)$ be the supremum of those $\theta>0$ such that there exists $n_0\in \N$ for which any metric space of cardinality $n\ge n_0$  has a subset of size $\ge n^\theta$ that embeds with distortion $D$ into an ultrametric. Both~\cite{BLMN05-main} and our new proof give the bound $\theta^*(2+\e)\gtrsim \e/\log(2/\e)$ (for different reasons). Must it be the case that $\theta^*(2+\e)$ tends to $0$ as $\e\searrow 0$? This is of course related the unknown behavior of the nonlinear Dvoretzky problem at distortion $D=2$. Computing the value of $\limsup_{D\to\infty} D(1-\theta^*(D))$ is also of interest; due to Theorem~\ref{thm:compute admissible} we know that using our scale-oblivious metric fragmentation procedure we cannot bound this number by less than $2e$.



\subsection*{Acknowledgements} We thank Manor Mendel for helpful discussions on the Thorup-Zwick lower bound. A.~N. is supported  by NSF grants  CCF-0635078
and CCF-0832795, BSF grant 2006009, and the Packard Foundation.
T.~T. is supported by a grant from the MacArthur foundation, by NSF grant
DMS-0649473, and by the NSF Waterman award.

\section{Randomized fragmentation}\label{sec:frag}


We begin with a lemma that fragments a metric space at a single pair  of scales $R>r>0$.

\begin{lemma}[Fragmentation lemma]\label{lem:frag}  Let $(X,d,\mu)$ be a metric probability space, and let $S\subseteq X$ be a compact subset of $X$. Fix $R > r > 0$ and a Borel-measurable non-negative function $w: S \to [0,\infty)$.  Then there exists a compact subset $T\subseteq S$ with
\begin{equation}\label{ep}
 \int_{T} \frac{\mu(B(x,R))}{\mu(B(x,r))} w(x)\ d\mu(x) \geq \int_S w(x)\ d\mu(x),
 \end{equation}
such that $T$ can be partitioned as $T = \bigcup_{n=1}^\infty T_n$, where each (possibly empty) $T_n$ is compact and contained in a ball of radius $r$, and any two non-empty $T_n, T_m$ are separated by a distance of at least $R-r$.
\end{lemma}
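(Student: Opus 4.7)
My strategy is the probabilistic method: I would construct $T$ by sampling random ball centers. First, let $y_1, y_2, \ldots$ be i.i.d.\ $X$-valued random variables with common distribution $\mu$, and for each $n \ge 1$ set
\[
T_n \eqdef \{x \in S : d(x, y_i) \ge R \text{ for all } i < n,\ \text{ and } d(x, y_n) \le r\},
\]
and $T \eqdef \bigcup_{n \ge 1} T_n$. The deliberate use of non-strict inequalities makes each $T_n$ a closed subset of the compact set $S$, hence compact, and $T_n \subseteq B(y_n, r)$ by construction. Since $R > r$, no value of $d(x, y_n)$ can be both $\le r$ and $\ge R$, so the $T_n$ are pairwise disjoint; and if $x \in T_n$ and $x' \in T_m$ with $n < m$, the triangle inequality applied at $y_n$ (using $d(x, y_n) \le r$ and $d(x', y_n) \ge R$) yields $d(x, x') \ge R - r$.

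The key computation is to average over the randomness. Fix $x \in S$ and set $q \eqdef \mu(\{y \in X : d(x, y) < R\})$, which is strictly positive by the nondegeneracy hypothesis applied at any radius strictly below $R$. Independence of the $y_i$ gives $\Pr[x \in T_n] = (1-q)^{n-1} \mu(B(x, r))$, so summing the geometric series yields
\[
\Pr[x \in T] \;=\; \frac{\mu(B(x, r))}{q} \;\ge\; \frac{\mu(B(x, r))}{\mu(B(x, R))}.
\]
Multiplying by $\tfrac{\mu(B(x, R))}{\mu(B(x, r))} w(x)$, integrating against $\mu$ on $S$, and swapping expectation and integral by Fubini then produces
\[
\E\!\left[\int_T \frac{\mu(B(x, R))}{\mu(B(x, r))} w(x)\, d\mu(x)\right] \;\ge\; \int_S w(x)\, d\mu(x),
\]
so some deterministic realization of $\{y_n\}$ achieves inequality \eqref{ep}, which I would then fix.

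The one remaining point is that the lemma requires $T$ itself to be compact, not merely each $T_n$. I would handle this by noting that the pairwise separation of non-empty $T_n$ by at least $R - r > 0$, together with compactness of $X$, forces only finitely many $T_n$ to be non-empty: otherwise a selector sequence (one point chosen from each non-empty fragment) would have pairwise distances bounded below by $R - r$, contradicting sequential compactness of $X$. Thus $T$ is a finite disjoint union of compact sets and is itself compact. I expect the main technical point will be precisely this bookkeeping—choosing the inequalities so each $T_n$ is closed while the $T_n$ remain disjoint, and leveraging ambient compactness to upgrade compactness of the pieces to compactness of $T$—whereas the averaging identity that drives the weight inequality is essentially immediate once the construction is in place.
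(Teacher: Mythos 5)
Your proposal is correct and is essentially the paper's proof: both greedily assign each $x\in S$ to the first sampled center that lands within distance $R$, keep $x$ only if that center is actually within distance $r$, compute $\Pr[x\in T]$ by summing a geometric series, and conclude by Fubini--Tonelli. The only (harmless) variation is that you impose $d(x,y_i)\ge R$ rather than $>R$ in the definition of $T_n$, which makes each piece closed from the outset and so avoids the paper's step of replacing the raw fragments $A_n$ by their closures; in exchange your $\Pr[x\in T]=\mu(B(x,r))/\mu(\{y:d(x,y)<R\})$ is only bounded below by $\mu(B(x,r))/\mu(B(x,R))$ rather than equal to it, but that inequality is all the lemma needs.
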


\begin{proof}
We use the probabilistic method.  Let $\{x_n\}_{n=1}^\infty$ be an i.i.d. sequence of points in $X$, selected using the measure $\mu$.  Observe that as $B(x,R)$ has positive measure for all $x\in X$, we will almost surely have $x_n \in B(x,R)$ for at least one $n \in \N$.  Thus if
we define the (random) quantity
\begin{equation}\label{nx}
 n(x) \stackrel{\mathrm{def}}{=} \inf \{ n \in \N: x_n \in B(x,R) \},
\end{equation}
then $n(x)$ is finite for almost every $x \in X$, and $x\mapsto n(x)$ is a measurable function of $x$.

Define a (random) subset $A \subseteq S$ by
\begin{equation}\label{epdef}
 A \stackrel{\mathrm{def}}{=} \{ x \in S: n(x)<\infty\ \wedge\   x_{n(x)} \in B(x,r) \}.
\end{equation}
Then $A = \bigcup_{n = 1}^\infty A_n$, where
\begin{equation}\label{epx}
 A_n \stackrel{\mathrm{def}}{=} \{ x \in S: n(x) = n \ \wedge\  x_n \in B(x,r) \}.
\end{equation}
By definition we have $A_n\subseteq B(x_n,r)$.  Also, if $x \in A_n$ and $y \in A_m$ for some $1 \leq n < m$, then by the definitions \eqref{nx}, \eqref{epx} we have $d(x_n,x)\le r$ and $d(x_n,y)>R$,
and hence by the triangle inequality we have $d(x,y) > R-r$.  Thus if we set $T_n \stackrel{\mathrm{def}}{=} \overline{A_n}$, then $T_n$ and $T_m$ are compact and separated by a distance of at least $R-r$ (this shows that only finitely many of the $T_n$ are non-empty). If we define $T \stackrel{\mathrm{def}}{=} \bigcup_{n = 1}^\infty T_n$, then $T$ is a compact subset of $S$.

Since $T\supseteq A$, in order to conclude the proof of Lemma~\ref{lem:frag} it suffices to prove the identity
\begin{equation}\label{eq:expectation}
\E \left[\int_A \frac{\mu(B(x,R))}{\mu(B(x,r))} w(x)\ d\mu(x) \right]= \int_S w(x)\ d\mu(x).
\end{equation}
By the Fubini-Tonelli theorem, in order to prove~\eqref{eq:expectation} it suffices
 to show that for all $x \in S$ we have,
\begin{equation}\label{eq:prob identity}
\Pr[ x \in A ] = \frac{\mu(B(x,r))}{\mu(B(x,R))}.
\end{equation}
Since $n(x)$ is finite almost surely, the definition~\eqref{epdef}, together with the joint independence of $x_1,x_2\ldots$, immediately implies that:
\begin{multline}\label{eq:compute prob}
\Pr[ x \in A ]= \sum_{n=1}^\infty \Pr\left[x_n \in B(x,r)\ \wedge\  x_1,\ldots,x_{n-1} \not \in B(x,R)\right]\\=\sum_{n=1}^\infty \mu(B(x,r))\left(1-\mu(B(x,R))\right)^{n-1}=\frac{\mu(B(x,r))}{\mu(B(x,R))}.
\end{multline}
This proves~\eqref{eq:prob identity}, and thus concludes the proof of Lemma~\ref{lem:frag}.
\end{proof}

We can iterate Lemma~\ref{lem:frag} as follows.

\begin{lemma}[Iterated fragmentation lemma]\label{lem:iterated} Fix $R,D>0$.
Let $(X,d,\mu)$ be a metric probability space of diameter at most $2R$, and let
$$ R = r_0 \ge r_1 \ge r_2 \ge \ldots > 0$$
be a sequence of radii converging to zero.    Then there exists a compact subset $S$ of $X$ such that
\begin{equation}\label{ep-mass}
 \mu(S) \geq \int_X \left(\prod_{n=1}^\infty \frac{\mu(B(x,r_n))}{\mu\left(B\left(x,r_n + \frac{2r_{n-1}}{D}\right)\right)}\right)\ d\mu(x),
\end{equation}
and $(S,d)$ embeds with distortion $D$ into an ultrametric.
\end{lemma}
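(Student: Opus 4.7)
The plan is to iterate Lemma~\ref{lem:frag} along the given sequence of radii, applied piece by piece at each scale, producing a nested chain of compact fragmentations whose intersection is $S$. Set $R_n \eqdef r_n + 2r_{n-1}/D$ and define the tail weights
\[
w_n(x) \eqdef \prod_{i=n+1}^\infty \frac{\mu(B(x,r_i))}{\mu(B(x,R_i))}.
\]
Each factor lies in $(0,1]$ (since $R_i \ge r_i$), so $w_n$ is a pointwise decreasing limit of measurable functions, hence itself measurable with values in $[0,1]$. Note that $w_0$ is precisely the integrand in~\eqref{ep-mass}, and that by inspection
\[
\frac{\mu(B(x,R_n))}{\mu(B(x,r_n))}\, w_n(x) = w_{n-1}(x).
\]

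I construct compact sets $X = S_0 \supseteq S_1 \supseteq \cdots$ together with partitions $S_n = \bigsqcup_k T_{n,k}$ that refine one another, as follows. Given $S_{n-1}$ and its partition into compact pieces $\{T_{n-1,k}\}_k$ (taking $S_0 = X$ with the trivial one-piece partition, legitimate because $\diam(X)\le 2r_0$), apply Lemma~\ref{lem:frag} separately to each $T_{n-1,k}$ with outer scale $R_n$, inner scale $r_n$, and weight $w_{n-1}|_{T_{n-1,k}}$. This yields a compact $T'_{n-1,k} \subseteq T_{n-1,k}$ partitioned into subpieces, each contained in a ball of radius $r_n$ and mutually separated by at least $R_n - r_n = 2r_{n-1}/D$. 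Setting $S_n \eqdef \bigcup_k T'_{n-1,k}$ with the induced partition, I sum the mass bounds from Lemma~\ref{lem:frag} over $k$ and apply the multiplicative identity to obtain the telescoping estimate
\[
\int_{S_n} w_n \, d\mu \ge \int_{S_{n-1}} w_{n-1} \, d\mu.
\]
Iterating from $n=0$ and using $w_n \le 1$ gives $\mu(S_n) \ge \int_X w_0 \, d\mu$ for every $n$.

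Now let $S \eqdef \bigcap_{n \ge 0} S_n$. Downward continuity of $\mu$ along the decreasing sequence of compact sets yields $\mu(S) \ge \int_X w_0\, d\mu$, which is~\eqref{ep-mass}. For the ultrametric, define on $S$ the function $\rho(x,y) \eqdef 2 r_{n(x,y)}$ for $x \ne y$, where $n(x,y)$ is the largest index $n$ with $x, y$ in a common piece of the level-$n$ partition; this is finite since $r_n \to 0$ forces any two distinct points to lie in distinct pieces at some scale. Nestedness of the partitions makes $\rho$ an ultrametric (any two common pieces of levels $\le \min(n(x,y),n(y,z))$ enclose $x,z$ as well). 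The level-$n(x,y)$ diameter bound gives $d(x,y) \le 2 r_{n(x,y)} = \rho(x,y)$, while the separation at level $n(x,y)+1$ gives $d(x,y) \ge 2 r_{n(x,y)}/D = \rho(x,y)/D$, producing the claimed distortion-$D$ embedding of $(S,d)$ into the ultrametric $(S,\rho)$.

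The one delicate issue is preserving the nested partition structure across scales. Applying Lemma~\ref{lem:frag} in a single shot to all of $S_{n-1}$ would not suffice, since a level-$n$ piece can have diameter up to $2r_n$, while pieces at level $n-1$ are only guaranteed to be separated by $2r_{n-2}/D$; without assuming $r_n < r_{n-2}/D$ one cannot force a new piece to lie inside a single old one, and the tree structure underlying $\rho$ would collapse. Fragmenting each prior piece independently circumvents this cleanly at the cost of summing the weighted mass inequality over~$k$. A minor technical remark: the infinite product defining $w_0$ is unproblematic because its factors lie in $(0,1]$, and the bound $w_n\le 1$ together with monotone convergence legitimizes all interchanges of limits and integrals.
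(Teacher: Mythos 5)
Your proof is correct and reaches the stated conclusion, but it takes a different route from the paper's and contains one unjustified claim along the way. The paper applies Lemma~\ref{lem:frag} in a single shot to all of $S_{n-1}$ at each stage (weight $w_{n-1}$, outer radius $R_n = r_n + 2r_{n-1}/D$, inner radius $r_n$), which does \emph{not} produce nested partitions. Nestedness is not needed, however: the paper defines $n(x,y)$ as the largest $n$ such that for \emph{all} $m\in\{1,\ldots,n\}$ the points $x,y$ lie in a common piece $S_{m,j(m)}$, which implicitly passes to the common refinement of the level-$1,\ldots,n$ partitions; this refinement is automatically nested, and the ultratriangle inequality follows from $n(x,z)\ge\min\{n(x,y),n(y,z)\}$, while the two-sided distance bounds use only the level-$n(x,y)$ diameter and the level-$(n(x,y)+1)$ separation. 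Your assertion that ``applying Lemma~\ref{lem:frag} in a single shot to all of $S_{n-1}$ would not suffice'' is therefore wrong — it is exactly what the paper does, and it does suffice. Your piece-by-piece fragmentation is a valid alternative that restores honest nestedness at the cost of summing the weighted mass inequality over the (finitely many, by compactness) level-$(n-1)$ pieces; it makes the tree structure underlying $\rho$ more explicit, but it is not forced on you.

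One minor correction: the displayed multiplicative identity should read
\begin{equation*}
\frac{\mu(B(x,R_n))}{\mu(B(x,r_n))}\,w_{n-1}(x) = w_n(x),
\end{equation*}
not $\frac{\mu(B(x,R_n))}{\mu(B(x,r_n))}\,w_n(x) = w_{n-1}(x)$. This is clearly what you in fact use two lines later (Lemma~\ref{lem:frag} with weight $w_{n-1}$ gives $\int_{S_n}\frac{\mu(B(x,R_n))}{\mu(B(x,r_n))}w_{n-1}\,d\mu\ge\int_{S_{n-1}}w_{n-1}\,d\mu$, and the corrected identity turns the left side into $\int_{S_n}w_n\,d\mu$), so the argument goes through; just fix the transposition.
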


\begin{proof} By applying Lemma~\ref{lem:frag} repeatedly, we obtain a decreasing sequence of compact subsets of $X$,
$$ X = S_0 \supseteq S_1 \supseteq S_2 \supseteq \ldots$$
satisfying for $n\ge 1$,
$$ \int_{S_n} \left(\prod_{m =n+1}^\infty \frac{\mu(B(x,r_m))}{\mu\left(B\left(x,r_m + \frac{2r_{m-1}}{D}\right)\right)}\right) \ d\mu(x) \geq \int_{S_{n-1}} \left(\prod_{m =n}^\infty \frac{\mu(B(x,r_m))}{\mu\left(B\left(x,r_m + \frac{2r_{m-1}}{D}\right)\right)}\right) \ d\mu(x),$$
such that for each $n\in \N$ we have $S_n= \bigcup_{j=1}^\infty S_{n,j}$, where each $S_{n,j}$ is compact and contained in a ball of radius $r_n$, and if $S_{n,j},S_{n,\ell}\neq \emptyset$ then $d(S_{n,j},S_{n,\ell})\ge 2r_{n-1}/D$. It follows inductively that
$$ \int_{S_n} \left(\prod_{m=n+1}^\infty \frac{\mu(B(x,r_m))}{\mu\left(B\left(x,r_m + \frac{2r_{m-1}}{D}\right)\right)}\right) \ d\mu(x) \geq
\int_X \left(\prod_{m=1}^\infty \frac{\mu(B(x,r_m))}{\mu\left(B\left(x,r_m + \frac{2r_{m-1}}{D}\right)\right)}\right)\ d\mu(x),$$
and in particular
$$ \mu(S_n) \geq \int_X \left(\prod_{m=1}^\infty \frac{\mu(B(x,r_m))}{\mu\left(B\left(x,r_m + \frac{2r_{m-1}}{D}\right)\right)}\right)\ d\mu(x).$$
If we set $S \stackrel{\mathrm{def}}{=} \bigcap_{n=1}^\infty S_n$, then $S$ is compact and obeys \eqref{ep-mass}.

If $x,y\in S$  are distinct, let $n(x,y)$ be the largest integer $n$ such that for all $m\in \{1,\ldots,n\}$ there is $j(m)\in \N$ for which $x,y\in S_{m,j(m)}$. Note that since the diameter of $S_{m,j}$ is at most $2r_m$, and $\lim_{m\to \infty }r_m=0$, such an $n$ must exist. Now define an ultrametric $\rho$ on $S$ by
$$ \rho(x,y) \stackrel{\mathrm{def}}{=} 2r_{n(x,y)}.$$
It is immediate to check that $\rho$ is symmetric, and obeys the ultratriangle inequality
$$\forall x,y,z\in S,\quad \rho(x,z) \leq \max\{ \rho(x,z), \rho(y,z) \}.$$

If $x,y\in S$ are distinct and $n=n(x,y)$, then by definition $x,y\in S_{n,j}$ for some $j\in \N$ and $x\in S_{n+1,k}$, $y\in S_{n+1,\ell}$, where $k\neq \ell$. Thus $d(x,y)\le \mathrm{diam}(S_{n,j})\le 2r_n=\rho(x,y)$ and $d(x,y)\ge d(S_{n+1,k},S_{n+1,\ell})\ge 2r_n/D=\rho(x,y)/D$. It follows that
the identity map from $(S,d)$ to $(S,\rho)$ has distortion at most $D$, completing the proof of Lemma~\ref{lem:iterated}.
\end{proof}


Now suppose that $(X,d)$ is a finite metric space, and that $\mu$ is the counting measure on $X$.  Then Lemma~\ref{lem:iterated}
specializes to
\begin{corollary}[Iterated fragmentation lemma, finite case]\label{finite} Fix $D,R>0$.
Let $X = (X,d)$ be a finite metric space of diameter at most $2R$, and let
$$ R = r_0 \ge r_1 \ge r_2 \ge \ldots > 0$$
be a sequence of radii converging to zero.    Then there exists a subset $S$ of $X$ such that
\begin{equation}\label{ep-mass-finite}
 |S| \geq \sum_{x \in X} \prod_{n=1}^\infty \frac{|B(x,r_n)|}{\left|B\left(x,r_n + \frac{2r_{n-1}}{D}\right)\right|},
\end{equation}
and $(S,d)$ embeds with distortion $D$ into an ultrametric.
\end{corollary}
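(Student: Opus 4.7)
The plan is to invoke Lemma~\ref{lem:iterated} directly, after normalizing the counting measure into a probability measure. Specifically, equip the finite metric space $(X,d)$ with the uniform probability measure $\mu$ given by $\mu(\{x\}) = 1/|X|$. All hypotheses of Lemma~\ref{lem:iterated} then hold automatically: $(X,d)$ is compact (trivially, being finite), $\mu$ is a Borel probability measure, every ball $B(x,r)$ has positive $\mu$-measure since it contains $x$, and measurability of $x \mapsto \mu(B(x,r))$ is automatic in the finite setting.

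The key observation is that the normalization cancels in the conclusion. Each ratio $\mu(B(x,r_n))/\mu(B(x, r_n + 2r_{n-1}/D))$ equals the cardinality ratio $|B(x,r_n)|/|B(x, r_n + 2r_{n-1}/D)|$, since the common factor $1/|X|$ cancels. The integral $\int_X \cdots\, d\mu$ becomes $\frac{1}{|X|} \sum_{x \in X} \cdots$, while $\mu(S) = |S|/|X|$. Multiplying the mass bound \eqref{ep-mass} of Lemma~\ref{lem:iterated} through by $|X|$ yields the desired inequality \eqref{ep-mass-finite}, and the distortion-$D$ embedding of $(S,d)$ into an ultrametric is inherited verbatim from Lemma~\ref{lem:iterated}.

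Because this is pure specialization, there is no substantive obstacle. The only minor sanity check is that the infinite product in \eqref{ep-mass-finite} is well-defined: once $r_n$ drops below the minimum interpoint distance of $X$, the ball $B(x,r_n)$ reduces to $\{x\}$, and once $r_n + 2r_{n-1}/D$ also falls below this threshold, the corresponding factor equals $1$. Hence only finitely many factors in the product are non-trivial, and the right-hand side of \eqref{ep-mass-finite} is a well-defined positive quantity.
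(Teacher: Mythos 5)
Your proof is correct and is precisely the paper's intended specialization: the paper states Corollary~\ref{finite} as the case where $\mu$ is the (uniformly normalized) counting measure, and you have simply spelled out that the $1/|X|$ normalization cancels in each ratio and rescales both sides of \eqref{ep-mass} by $|X|$. Nothing further is needed.
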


The condition \eqref{ep-mass-finite} is difficult to work with.  However, using a random choice of $r_n$, and Jensen's inequality, one can obtain a more workable condition in terms of the notion of admissible  exponent as in Definition~\ref{admis}. This is contained in Theorem~\ref{coro:main}, which we are now in position to prove.



\begin{proof}[Proof of Theorem~\ref{coro:main}] By rescaling we may assume that $X$ has diameter at most $2$.  We let $r_0,r_1,\ldots$ be the random variables in Definition \ref{admis}, i.e., \eqref{sigma-def} holds for all $r>0$.  Applying Corollary \ref{finite} we thus obtain a (random) subset $S\subseteq X$ obeying \eqref{ep-mass-finite}, which embeds with distortion $D$ into an ultrametric.  Taking expectations we obtain
$$ \E \left[|S| \right]\geq
\sum_{x \in X} \E \left[\prod_{n=1}^\infty \frac{|B(x,r_n)|}{|B\left(x,r_n + \frac{2r_{n-1}}{D}\right)|}\right],$$
and hence by Jensen's inequality,
\begin{equation}\label{eq:after jensen} \E \left[|S|\right] \geq
\sum_{x \in X} \exp\left( \E \left[\sum_{n=1}^\infty \log \left(\frac{|B(x,r_n)|}{|B\left(x,r_n + \frac{2r_{n-1}}{D}\right)|}\right)\right]\right).\end{equation}

For every $x\in X$ let $0=t_1(x)<t_2(x)<\ldots<t_{k(x)}(x)$ be the radii at which $|B(x,t)|$ jumps, i.e.,
 $1=|B(x,t_1(x))|<|B(x,t_2(x))|<\ldots<|B(x,t_{k(x)}(x))|=|X|$, and
$B(x,t)=B(x,t_j(x))$ if $t_j(x)\le t<t_{j+1}(x)$ (where we use the convention $t_{k(x)+1}(x)=\infty$). Note that for every random variable $r\ge 0$ we have the following simple identity:
\begin{eqnarray}\label{eq:identity}
\E\left[\log |B(x,r)|\right]\nonumber&=&\sum_{j=1}^{k(x)} \Pr\left[t_j(x)\le r<t_{j+1}(x)\right]\log|B(x,t_j(x))|\\\nonumber&=&\sum_{j=1}^{k(x)} \left(\Pr\left[r\ge t_j(x)\right]-\Pr\left[r\ge t_{j+1}(x)\right]\right)\log|B(x,t_j(x))|\\&=&\sum_{j=2}^{k(x)}\Pr\left[r\ge t_j(x)\right]\log\left(\frac{|B(x,t_j(x))|}{|B(x,t_{j-1}(x))|}\right).
\end{eqnarray}
Applying~\eqref{eq:identity} to $r=r_n$ and $r=r_n+\frac{2r_{n-1}}{D}$, we see that~\eqref{eq:after jensen} can be written as
\begin{multline}\label{eq:rs}
\E \left[|S|\right] \\\geq\sum_{x \in X} \exp\left(-\sum_{j=2}^{k(x)}\left(\sum_{n=1}^\infty\Pr\left[r_n < t_j(x) \leq r_n + \frac{2r_{n-1}}{D}\right]\right)\log\left(\frac{|B(x,t_j(x))|}{|B(x,t_{j-1}(x))|}\right)\right).
\end{multline}
Applying \eqref{sigma-def} we conclude that
$$ \E \left[|S|\right] \geq \sum_{x \in X} \exp\left(- \sigma \sum_{j=2}^{k(x)} \log\left(\frac{|B(x,t_j(x))|}{|B(x,t_{j-1}(x))|}\right)\right)=\sum_{x\in X} |X|^{-\sigma}=|X|^{1-\sigma},$$
where we used the fact that $|B(x,t_1(x))|=1$ and $|B(x,t_{k(x)}(x))|=|X|$. The proof of Theorem~\ref{coro:main} is complete.
\end{proof}

\section{Proof of Theorem~\ref{thm:compute admissible}}\label{sec:addmissible}

Define $f:[0,1]\to [0,1]$ by $f(\beta)=\beta(1-\beta)^{\frac{1-\beta}{\beta}}$, where $f(0)=0$ and $f(1)=1$. Note that $(\log f)'(\beta)=-\frac{1}{\beta^2}\log(1-\beta)$, and therefore $f$ is strictly increasing on $[0,1]$. It follows that for each $\alpha\in [0,1]$ there is a unique $\beta=\beta(\alpha)$ satisfying the identity
\begin{equation}\label{eq:beta alpha}
\alpha=\beta(1-\beta)^{\frac{1-\beta}{\beta}}.
\end{equation}

Fix $D>2$ and set $\beta=\beta(2/D)$. Let $U$ be a random variable that is uniformly distributed on $[0,1]$. We shall define a sequence of random variables $r_0 \ge r_1 \ge r_2 \ge \ldots > 0$ as in~\eqref{eq: r_n}, i.e, by setting $r_0=1$, and for $n\in \N$,
$$
r_n\stackrel{\mathrm{def}}{=} (1-\beta)^{\frac{U+n-1}{\beta}}.
$$

Writing $\alpha =2/D$, for every $r>0$ we have the following bound on the left hand side of~\eqref{sigma-def}:
\begin{multline}\label{eq:get intervals}
 \sum_{n=1}^\infty \Pr\left[ r_n < r \le r_n + \frac{2r_{n-1}}{D} \right]\\\le  \sum_{n=1}^\infty \Pr\left[ (1-\beta)^{\frac{U+n-1}{\beta}} < r \le (1-\beta)^{\frac{U+n-1}{\beta}} + \alpha (1-\beta)^{\frac{U+n-2}{\beta}} \right]= \sum_{n=1}^\infty \Pr\left[ U\in I_n\right],
\end{multline}
where $I_n$ is the interval:
$$
I_n\stackrel{\mathrm{def}}{=}\left(\frac{\beta \log r}{\log (1-\beta)}-n+1,\frac{\beta \log r}{\log (1-\beta)}-n+1-\frac{\beta\log\left(1+\frac{\alpha}{(1-\beta)^{1/\beta}}\right)}{\log(1-\beta)}\right]\stackrel{\mathrm{def}}{=}(a-n,b-n].
$$
The identity~\eqref{eq:beta alpha} implies that $b-a=\beta$. In particular, $b-a\le 1$, and hence the intervals $\{I_n\}_{n=1}^\infty$ are disjoint, at most two of them intersect $[0,1]$, and the total length of the intersection of $\bigcup_{n=1}^\infty I_n$ with $[0,1]$ is at most $b-a$.
Combined with~\eqref{eq:get intervals}, this observation implies that
$$
\sum_{n=1}^\infty \Pr\left[ r_n < r \le r_n + \frac{2r_{n-1}}{D} \right]\le \mathrm{length}\left(\left(\bigcup_{n=1}^\infty I_n\right)\cap [0,1]\right)\le b-a=\beta.
$$

This proves the second assertion of Theorem~\ref{thm:compute admissible}. It remains to prove that for all $D>2$ we have $\sigma^*(D)\ge \beta(2/D)$.
To this end let $\{r_n\}_{n=0}^\infty$ be a sequence of random variables decreasing to zero as in Definition~\ref{admis}, so that \eqref{sigma-def} holds for some $\sigma>0$. Our goal is to show that $\sigma\ge  \beta(2/D)$.

For $\alpha,p\in (0,1)$ denote
\begin{equation}\label{eq:def beta_p}
\beta_p(\alpha)\eqdef \inf_{x> 1} \frac{(1+\alpha x)^p-1}{x^p-1}.
\end{equation}
By homogeneity, for every $y\ge x>0$ we have $(x+\alpha y)^p-x^p\ge \beta_p(\alpha)\left(y^p-x^p\right)$. Thus for all $n\in \N$,
\begin{equation}\label{eq:telescope}
\sum_{n=1}^\infty \E\left[\left(r_n+\frac{2r_{n-1}}{D}\right)^p-r_n^p\right]\ge \beta_p\left(\frac{2}{D}\right)\E\left[\sum_{n=1}^\infty \left(r_{n-1}^p-r_n^p\right)\right]=\beta_p\left(\frac{2}{D}\right),
\end{equation}
where we used the fact that $\lim_{n\to \infty} r_n=0$ and $r_0=1$.

Now,
\begin{multline}\label{eq:fub}
\sum_{n=1}^\infty \E\left[\left(r_n+\frac{2r_{n-1}}{D}\right)^p-r_n^p\right] =\sum_{n=1}^\infty \int_{0}^\infty pr^{p-1}\left(\Pr\left[r_n+\frac{2r_{n-1}}{D}\ge r\right]-\Pr\left[r_n\ge r\right]\right)dr\\
= \sum_{n=1}^\infty \int_0^{1+\frac{2}{D}}pr^{p-1}\Pr\left[ r_n < r \le r_n + \frac{2r_{n-1}}{D} \right]dr\stackrel{\eqref{sigma-def}}{\le} \sigma\left(1+\frac{2}{D}\right)^p.
\end{multline}
By combining~\eqref{eq:telescope} and~\eqref{eq:fub} (which hold for all $p\in (0,1)$), we see that the bound $\sigma\ge  \beta(2/D)$ will be proven if we manage to show that for all $\alpha\in (0,1)$,
\begin{equation}\label{eq:limit goal}
\limsup_{p\to 0} \beta_p(\alpha)\ge \beta(\alpha),
\end{equation}
where $\beta(\alpha)$ is the unique $\beta\in (0,1)$ satisfying~\eqref{eq:beta alpha}.

To prove~\eqref{eq:limit goal}, define $f:[0,\infty)\to \R$ by $f(x)=(1+\alpha x)^p-1-\beta_p(\alpha)\left(x^p-1\right)$. Note that $f$ takes only non-negative values, due to the definition~\eqref{eq:def beta_p}. By considering the limit as $x\to \infty$ of the right hand side of~\eqref{eq:def beta_p}, we see that $\beta_p(\alpha)\le \alpha^p$. But, it cannot be the case that $\beta_p(\alpha)=\alpha^p$, since otherwise $f(x)=(1+\alpha x)^p-(\alpha x)^p-(1-\alpha^p)$, which, since $p\in (0,1)$, tends to $-(1-\alpha^p)<0$ as $x\to\infty$, contradicting the non-negativity of $f$ on $[1,\infty)$. Thus $\beta_p(\alpha)<\alpha^p$. It follows in particular that the infimum in~\eqref{eq:def beta_p} is actually a minimum, i.e., there exists $x_0\in (1,\infty)$ for which $\beta_p(\alpha)=\frac{(1+\alpha x_0)^p-1}{x_0^p-1}$. This is the same as $f(x_0)=0$, and since $f$ is non-negative, $x_0$ must be a global minimum of $f$, and hence $f'(x_0)=0$.

From $0=f'(x_0)=p\alpha(1+\alpha x_0)^{p-1}-p\beta_p(\alpha)x_0^{p-1}$ we see that
\begin{equation}\label{eq:compute x_0}
x_0=\frac{1}{\left(\alpha/\beta_p(\alpha)\right)^{1/(1-p)}-\alpha}.
\end{equation}
Substituting this value of $x_0$ into the equation $f(x_0)=0$, we see that
\begin{equation}\label{eq:beta_p identity}
\frac{\left(\alpha/\beta_p(\alpha)\right)^{p/(1-p)}}
{\left(\left(\alpha/\beta_p(\alpha)\right)^{1/(1-p)}-\alpha\right)^p}-1=\beta_p(\alpha)
\left(\frac{1}
{\left(\left(\alpha/\beta_p(\alpha)\right)^{1/(1-p)}-\alpha\right)^p}-1\right).
\end{equation}

Denote $\beta=\limsup_{p\to 0} \beta_p(\alpha)$.  If $\beta=1$ then~\eqref{eq:limit goal} holds trivially. We may therefore assume that $\beta<1$. Moreover, \eqref{eq:compute x_0} combined with $x_0>1$ implies that $\left(\alpha/\beta_p(\alpha)\right)^{1/(1-p)}-\alpha<1$, or $\beta_p(\alpha)>\frac{\alpha}{(1+\alpha)^{1-p}}$. Thus $\beta\ge \frac{\alpha}{1+\alpha}$ (all that we will need below is that $\beta\neq 0$).

If $\{p_k\}_{k=1}^\infty \subseteq (0,1)$ is such that $\lim_{k\to \infty}p_k=0$ and $\lim_{k\to \infty}\beta_{p_k}(\alpha)=\beta$, then it follows from~\eqref{eq:beta_p identity} that:
\begin{equation}\label{eq:before dividing}
p_k\left(\log\left(\frac{\alpha}{\beta}\right)-\log\left(\frac{\alpha}{\beta}-\alpha\right)\right)+o(p_k)=
-p_k\beta\log\left(\frac{\alpha}{\beta}-\alpha\right)+o(p_k).
\end{equation}
Since, as argued above, $\frac{\alpha}{\beta}, \frac{\alpha}{\beta}-\alpha\in (0,\infty)$, the asymptotic identity~\eqref{eq:before dividing} implies that
$$
\log\left(\frac{\alpha}{\beta}\right)-\log\left(\frac{\alpha}{\beta}-\alpha\right)=
-\beta\log\left(\frac{\alpha}{\beta}-\alpha\right),
$$
which simplifies to give $\alpha=\beta(1-\beta)^{(1-\beta)/\beta}$. Since we already argued (in the paragraph preceding~\eqref{eq:beta alpha}), that $\beta(\alpha)$ is the unique solution of the equation~\eqref{eq:beta alpha}, we deduce that $\beta=\beta(\alpha)$. The proof of~\eqref{eq:limit goal}, and hence also the proof of Theorem~\ref{thm:compute admissible}, is complete.
\qed



\bibliographystyle{abbrv}
\bibliography{frag}

\end{document}
\endinput

\newpage
For $\e\in (0,1)$ denote $r_n^\e\eqdef \max\{r_n,\e\}$.
It follows from Lemma~\ref{lem:numerical} that
\begin{multline}\label{telescope}
\sum_{n=1}^\infty\E\left[\log\left(1+\frac{2r_{n-1}^\e}{Dr_n^\e}\right)\right]\ge \beta\left(\frac{2}{D}\right)\sum_{n=1}^\infty\E\left[\log\left(\frac{r_{n-1}^\e}{r_{n}^\e}\right)\right]\\=
\beta\left(\frac{2}{D}\right)\E\left[\sum_{n=1}^\infty \left(\log \left(r_{n-1}^\e\right)-\log \left(r_n^\e\right)\right)\right]=\beta\left(\frac{2}{D}\right)\log\left(\frac{1}{\e}\right).
\end{multline}
At the same time,
\begin{eqnarray}\label{eq:fubini}
\E\left[\log\left(1+\frac{2r_{n-1}^\e}{Dr_n^\e}\right)\right]\nonumber&=&
\E\left[\log\left(r_n^\e+\frac{2r_{n-1}^\e}{D}\right)\right]-\E\left[\log \left(r_n^\e\right)\right]\\\nonumber&=&\int_0^\infty \frac{1}{r}\Pr\left[r_n^\e+\frac{2r_{n-1}^\e}{D}\ge r\right]dr-\int_0^\infty \frac{1}{r}\Pr\left[r_n^\e\ge r\right]dr\\&\le&
\int_{\e\left(1+\frac{2}{D}\right)}^{1+\frac{2}{D}}\frac{1}{r}\Pr\left[r_n^\e<r\le  r_n^\e+\frac{2r_{n-1}^\e}{D} \right]dr.
\end{eqnarray}
It follows that
\begin{multline}
\sum_{n=1}^\infty \E\left[\log\left(1+\frac{2r_{n-1}^\e}{Dr_n^\e}\right)\right]\stackrel{\eqref{eq:fubini}}{=}
\int_\e^{1+\frac{2}{D}}\frac{1}{r}\left(\sum_{n=1}^\infty\Pr\left[r_n^\e<r\le  r_n^\e+\frac{2r_{n-1}^\e}{D} \right]\right)dr\\=\lim_{N\to \infty} \int_\e^{1+\frac{2}{D}}\frac{1}{r}\left(\sum_{n=1}^N\Pr\left[r_n<r\le  r_n^\e+\frac{2r_{n-1}^\e}{D} \right]\right)dr
\end{multline}

\begin{lemma}\label{lem:numerical} For every $\alpha\in (0,1)$, $p>0$ and $x\ge 1$, we have:
\begin{equation}\label{eq:numerical}
(1+\alpha x)^p-1\ge p\beta(\alpha)\log x.
\end{equation}
\end{lemma}
\begin{proof} Since $(1+\alpha x)^p-1=e^{p\log(1+\alpha x)}-1\ge p\log(1+\alpha x)$, it suffices to show that $g(x)\ge 0$ for $x\ge 1$, where $g(x)\eqdef \log(1+\alpha x)-\beta(\alpha)\log x$. Then $g(1)\ge 0$ and $\lim_{x\to \infty} g(x)=\infty$, since $\beta(\alpha)<1$. Now, $g'(x)=\frac{\alpha}{1+\alpha x}-\frac{\beta(\alpha)}{x}$, so the unique zero of $g'$ is at $x=\frac{\beta(\alpha)}{\alpha(1-\beta(\alpha))}$. It therefore suffices to check that  $g\left(\frac{\beta(\alpha)}{\alpha(1-\beta(\alpha))}\right)\ge 0$. But,
\begin{multline*}
g\left(\frac{\beta(\alpha)}{\alpha(1-\beta(\alpha))}\right)=\log\left(1+\frac{\alpha \beta(\alpha)}{\alpha(1-\beta(\alpha))}\right)-\beta(\alpha)\log\left(\frac{\beta(\alpha)}{\alpha(1-\beta(\alpha))}\right)\\
= \log\left(\frac{1}{1-\beta}\right)-\log\left(\frac{\beta(\alpha)^{\beta(\alpha)}}{\alpha^{\beta(\alpha)}
(1-\beta(\alpha))^{\beta(\alpha)}}\right)=\log\left(\frac{\alpha^{\beta(\alpha)}}{\beta(\alpha)^{\beta(\alpha)}
(1-\beta(\alpha))^{1-\beta(\alpha)}}\right)\stackrel{\eqref{eq:beta alpha}}{=}0,
\end{multline*}
as required.
\end{proof}

\subsection{Line into ultrametrics}

\begin{lemma} Fix $D>1$.
Let $S\subseteq \{1,\ldots,n\}$ be $D$-equivalent to an ultrametric. Then $|S|\le n^\alpha$, where
$$
\alpha =\frac{\log 2}{\log\left(\frac{2}{1-1/D}\right)}=1-\frac{1}{D\log 2}+O\left(\frac{1}{D^2}\right).
$$
\end{lemma}

\begin{proof}
The proof is by induction on $n$. The case $n=1$ is vacuous. Assume that $n>1$ and let $S\subseteq \{1,\ldots,n\}$ be $D$-equivalent to an ultrametric $\rho$:
 $$
 \forall i,j\in S,\quad |i-j|\le \rho(i,j)\le D|i-j|.
 $$
 Denote $a=\min S$ and $b=\max S$. We claim that there exists $p,q\in \{a,a+1,\ldots,b\}$ such that $[p,q]\cap S=\emptyset$ and $|q-p|\ge \frac{b-a}{D}$. Indeed, otherwise we could find $a=s_0<s_2<\ldots<s_k=b$ such that $\{s_0,\ldots,s_k\}\subseteq S$, and $|s_i-s_{i-1}|< \frac{b-a}{D}$ for all $i\in \{1,\ldots,k\}$.
\end{proof}
